\title{Completely regular codes in graphs covered by a Hamming graph\thanks{The work was supported by the grant of The Natural Science Foundation of Hebei Province (project No. A2023205045)}}
\author{Sergey Goryainov\thanks{School of Mathematical Sciences, Hebei International Joint Research Center for Mathematics and Interdisciplinary Science,
Hebei Key Laboratory of Computational Mathematics and Applications, Hebei Workstation for Foreign Academicians,
Hebei Normal University, Shijiazhuang 050024, P.R. China}\ and Denis Krotov\thanks{School of Mathematical Sciences, Hebei Key Laboratory of Computational Mathematics
and Applications, Hebei Normal University, Shijiazhuang 050024, P. R. China}
\thanks{Sobolev Institute of Mathematics, 
Novosibirsk 630090, Russia}}
\date{}
\newcommand{\FF}{\mathbb{F}}
\newcommand{\ZZ}{\mathbb{Z}}
\newcommand{\Aut}{\mathrm{Aut}}
\newcommand{\Cay}{\mathrm{Cay}}
\newcommand{\vx}{}
\newtheorem{proposition}{Proposition}
\newtheorem{lemma}{Lemma}
\newtheorem{corollary}{Corollary}
\theoremstyle{remark}
\newtheorem{remark}{Remark}
\theoremstyle{definition}
\newtheorem{definition}{Definition}
\begin{document}
\maketitle
\begin{abstract}
In Cayley graphs on the additive group of a small vector space over GF$(q)$, $q=2,3$, we look for
completely regular (CR) codes whose parameters are new in Hamming graphs over the same field. The existence of a CR code in such Cayley graph $G$ implies the existence of a CR code with the same parameters in the corresponding Hamming graph that covers $G$. In such a way, we find several completely regular codes with new parameters in Hamming graphs over GF$(3)$. The most interesting findings are two new CR-$1$ (with covering radius~$1$) codes that are independent sets (such CR are equivalent to optimal orthogonal arrays attaining the Bierbrauer--Friedman bound) and one new CR-$2$. By recursive constructions, every knew CR code induces an infinite sequence of CR codes (in particular, optimal orthogonal arrays if the original code was CR-$1$ and independent).
In between, we classify feasible parameters of CR codes in several strongly regular graphs.
\end{abstract}

\section{Introduction}

The main result of the current report
is finding completely regular (CR) codes with new parameters
in Hamming graphs $H(n,3)$.
The summary with new parameters can be found in 
Section~\ref{s:sum}. 
They update the tables of small parameters of CR codes
in~\cite{KooKroMar:Ch7tables}. 
Most of new parameters update the smallest~$n$ for which
CR codes with given intersection array are known to exist in $H(n,3)$. However, the discovered intersection array $\{21,4;2,21\}$ is completely new;
 $\{21,4;2,21\}$-CR codes were not know before
 in $H(n,3)$ for any~$n$.

Covering-radius-$1$ CR codes that are independent sets in Hamming graphs corresponds to optimal orthogonal arrays attaining the Bierbrauer--Friedman bound, see e.g. \cite[Sect.\,1.4.3.1]{KroPot:Ch1}. 
We have found two new such codes. 
They are first such nonbinary codes whose cardinality
is not a power of the prime divisor of~$q$ (and so, codes with the same parameters cannot be constructed 
as linear or additive codes).
Because of recursive constructions described in Section~\ref{s:constr}, now we have an infinite series
of such codes.

\section{Preliminaries}

In our note, all graphs are simple, without loops or multiple edges. 
However, some introduced concepts and 
claims are valid for multigraphs,
see the details e.g. in \cite{KroPot:Ch1}.

As usual, $\FF(q)$ denotes the finite field
of prime power order $q$ 
and $\FF(q)^n$ denotes the vector space
of $n$-tuples ($n$-words) over~$\FF(q)$.
In this note, we mostly consider
the cases $q=2$ and $q=3$; 
in particular, Cayley graphs,
defined below,
are considered over the additive group of vector spaces over $\FF(q)$, $q=2,3$.

\newcommand{\Id}{\mathrm{Id}}
\begin{definition}[\bf\boldmath Cayley graph]
\label{1def:Cayley}
The \emph{Cayley graph} $\Cay(G,S)$ on a group $G$ (written in the multiplicative notation)
with \emph{connecting} set
$S$, $S\subset G\backslash\{\Id\}$, $S=S^{-1}$,
is the graph whose vertices
are the elements of~$G$ and whose edge multiset~$E$ is $\{\{x,ax\}:\ x\in G, a\in S\}$
(equivalently, $\{x,y\}\in E$ if and only if $y\cdot x^{-1} \in S$).
The Cayley graph is simple if and only if
$\Id \not\in S$ and $S$ is a simple set.
\end{definition}

\begin{definition}[\bf\boldmath 
Hamming graph
]
\label{1def:Hamming}
The \emph{complete graph} $K_q$ is the simple graph of order~$q$ where every two different vertices are adjacent.
The \emph{Hamming graph} $H(n,q)$ is the Cartesian product of $n$ copies of~$K_q$.
According to the definition of the Cartesian product, the vertex set of $H(n,q)$
consists of all $n$-words in some alphabet of order~$q$, which,
if $q$ is a prime power, is often associated with the elements of the Galois field
$\FF_q=\mathrm{GF}(q)$ of order~$q$.
In this case, the vertex set of $H(n,q)$ forms an $n$-dimensional vector space~$\FF_q^n$ over~$\FF_q$,
and $H(n,q)$ is a Cayley graph on its additive group~$\FF_q^{n+}$. The natural minimum-path distance in a Hamming graph is called the \emph{Hamming distance}.
\end{definition}

\begin{definition}[\bf\boldmath clique, coclique = independent set]
\label{1def:clique}
In a graph, a set of vertices that are mutually adjacent (mutually non-adjacent) to each other
is called a \emph{clique} (\emph{coclique}, or \emph{independent set}) or, more specifically,
\emph{$k$-clique} (\emph{$k$-coclique}) where $k$ is the cardinality of this set.
\end{definition}

\begin{definition}[\bf\boldmath perfect coloring, equitable partition, quotient matrix]
\label{1def:equitable}
A $k$-coloring of a graph~$\Gamma$ is called \emph{perfect}
if there is a $k$-by-$k$
matrix $(S_{i,j})_{i,j\in \Sigma}$
(the \emph{quotient} matrix)
whose rows and columns are indexes by colors,
such that every vertex of color~$i$
has exactly~$S_{i,j}$ neighbors
of color~$j$. An ordered partition $(C_0, \ldots, C_{k-1})$
of the vertex set of~$\Gamma$ into nonempty sets
$C_0$, \ldots, $C_{k-1}$ is called \emph{equitable},
or an \emph{equitable $k$-partition} if the vertex coloring~$f$
defined as $f(x)=i$ if $x\in C_i$ is perfect.
We will also use
the shorter forms
``$S$-perfect coloring'' for ``perfect coloring with quotient matrix~$S$''
and 
``equitable $S$-partition'' for ``equitable partition with quotient matrix~$S$''.
\end{definition}

\begin{definition}[\bf\boldmath covering radius, completely regular (CR) code, completely regular partition]\label{1d:crc}
\label{1def:crc}
Given a nonempty set~$C$ of vertices of a graph~$\Gamma=(V,E)$,
the \emph{distance partition} with respect to~$C$
is the partition $(C^{(0)},C^{(1)},\ldots ,C^{(\rho )})$ of~$V$
with respect to the distance from~$C$: $C^{(i)}=\{\vx x\in V:
\ d(\vx x,C)=i\}$. The number~$\rho=\rho(C)$ of
non-empty cells different from~$C$
in the distance partition
is called the
\emph{covering radius} of~$C$.
A nonempty set~$C$ (code) of vertices of a graph
is called
\emph{completely regular} (\emph{CR}) (completely regular set or completely regular code) 
if the partition of the vertex set
with respect to the distance
from~$C$ is equitable,
or, equivalently,
if the coloring $f(x)=d(x,C)$ of the vertex set with respect to the distance from~$C$
is perfect.
The quotient matrix
$(S_{i,j})_{i,j=0}^\rho$
corresponding
to a completely regular code
is always tridiagonal, and for regular graphs it is often written
in the form of the \emph{intersection array}
$$\{S_{0,1},S_{1,2},\ldots,S_{\rho-1,\rho}; S_{1,0},S_{2,1},\ldots,S_{\rho,\rho-1}\}$$
(the diagonal elements of the quotient matrix are uniquely determined from
the intersection array and the degree of the graph).

We will use the abbreviation ``$A$-CR'' for ``completely regular with intersection array~$A$''
or ``completely regular with quotient matrix~$A$'',
depending on the context, and 
the abbreviation ``CR-$\rho$'' for 
``completely regular with covering radius~$\rho$''.
\end{definition}

A very important special case
of perfect colorings
is graph coverings.

\begin{definition}[\bf\boldmath graph covering]\label{1d:gr-cov}
\label{1def:covering}
Assume that $f$ is a perfect coloring of a graph~$\Gamma$
with symmetric quotient matrix~$S$ and $\overline P$ is the equitable partition corresponding to~$f$.
Then $S$ can be considered as the adjacency matrix of
some graph, denoted $\Gamma/\overline P$, on the set of colors
(if $S$ is a $\{0,1\}$-matrix with zero diagonal,
then $T$ is a simple graph),
and we say that the coloring~$f$,
as well as the corresponding partition~$\overline P$,
is a \emph{covering} of the \emph{target} graph~$\Gamma/\overline P$
by a \emph{cover} graph~$\Gamma$.
In other words, a covering is a locally bijective
(bijective on the set of edges incident to a vertex)
surjective homomorphism from~$\Gamma$ to~$\Gamma/\overline P$.
\end{definition}

\begin{lemma}\label{1l:cay-cov}
    A surjective homomorphism $\varphi$ 
    from a group~$G$ onto a group $\varphi(G)$
    is a covering of $\Cay(\varphi(G),\varphi(S))$
    by $\Cay(G,S)$, for any multiset~$S$ of elements of~$G$ such that $S^{-1}=S$.
\end{lemma}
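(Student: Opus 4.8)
The plan is to exhibit the fibers of $\varphi$ as an equitable partition of $\Cay(G,S)$ whose quotient matrix is exactly the adjacency matrix of $\Cay(\varphi(G),\varphi(S))$, and then to check that $\varphi$ itself realizes the covering in the sense of Definition~\ref{1def:covering}. First I would fix notation: the fibers $\varphi^{-1}(g)$, $g\in\varphi(G)$, are the cosets of $\ker\varphi$ and partition the vertex set $G$ of $\Cay(G,S)$; take the coloring $f:=\varphi$, indexed by the elements of $\varphi(G)$. All cells are nonempty precisely because $\varphi$ maps $G$ onto $\varphi(G)$.

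The key step is a direct computation. By Definition~\ref{1def:Cayley}, the multiset of neighbors of a vertex $x$ in $\Cay(G,S)$ is $Sx=\{ax:\ a\in S\}$; applying the homomorphism $\varphi$, the multiset of colors occurring on $Sx$ is $\{\varphi(a)\varphi(x):\ a\in S\}=\varphi(S)\,\varphi(x)$. Hence, for $g=\varphi(x)$ and any color $h\in\varphi(G)$, the number of neighbors of $x$ colored $h$ equals the multiplicity of $hg^{-1}$ in the multiset $\varphi(S)$, a quantity depending only on $g$ and $h$ and not on the choice of $x$ in the fiber; so $f$ is a perfect coloring. Moreover this multiplicity is by definition the $(g,h)$-entry of the adjacency matrix $S'$ of $\Cay(\varphi(G),\varphi(S))$, and $S'$ is symmetric because $\varphi(S)^{-1}=\varphi(S^{-1})=\varphi(S)$ forces the multiplicities of $hg^{-1}$ and $gh^{-1}=(hg^{-1})^{-1}$ to agree. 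Thus the equitable partition into fibers has quotient matrix $S'=$ the adjacency matrix of $\Cay(\varphi(G),\varphi(S))$, which by Definition~\ref{1def:covering} is exactly the assertion that $f$ is a covering of $\Cay(\varphi(G),\varphi(S))$ by $\Cay(G,S)$.

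To match the ``locally bijective surjective homomorphism'' formulation in the same definition, I would additionally note that $\varphi$ is surjective on vertices, sends edges to edges (an edge $\{x,ax\}$ goes to the edge or loop $\{\varphi(x),\varphi(a)\varphi(x)\}$), and, at each vertex $x$, restricts on the incident-edge multiset $Sx$ to the assignment $a\mapsto\varphi(a)$ from $S$ onto $\varphi(S)$, which is a bijection of multisets by the very definition of $\varphi(S)$ as the image multiset; hence $\varphi$ is bijective on the edges incident to any vertex.

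I do not expect a genuine obstacle here; the one thing to be careful with rather than difficult is the bookkeeping with multisets. Since $S$ is allowed to be a multiset, both $\Cay(G,S)$ and $\Cay(\varphi(G),\varphi(S))$ are multigraphs (and the target may even acquire loops if $\varphi$ collapses some $a\in S$ to $\Id$, in which case such an $a$ simply produces an edge inside a fiber), so ``neighbor'', ``equitable partition'', and ``locally bijective'' must be read with multiplicities throughout; with that convention, for which the needed extensions of the definitions are as in~\cite{KroPot:Ch1}, every step above is a routine verification.
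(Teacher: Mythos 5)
Your proof is correct. The paper states this lemma without proof (it is presented as a known fact), and your argument is exactly the standard verification one would supply: the fibers of $\varphi$ form an equitable partition whose quotient matrix is the adjacency matrix of $\Cay(\varphi(G),\varphi(S))$, because the number of neighbors of $x$ in the fiber over $h$ is the multiplicity of $h\varphi(x)^{-1}$ in $\varphi(S)$, and local bijectivity is the multiset bijection $a\mapsto\varphi(a)$ on the edges at each vertex. Your closing caveat about multiplicities and possible loops in the target (when $S\cap\ker\varphi\neq\emptyset$) is exactly the right thing to flag, and is consistent with the paper's remark that these notions extend to multigraphs as in~\cite{KroPot:Ch1}.
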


\begin{corollary}
    Any Cayley graph $\Cay(\FF_q^{k+},S)$ on the additive group of $\FF_q^{k}$, $q\in\{2,3\}$,
    is covered by $H(|S|/(q-1),q)$. Moreover, the corresponding covering is a homomorphism
    of $\FF_q^{n}$, $n=|S|/(q-1)$, onto $\FF_q^{k}$.
\end{corollary}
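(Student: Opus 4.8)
The plan is to realise the Hamming graph itself as a Cayley graph on $\FF_q^{n+}$ and then apply Lemma~\ref{1l:cay-cov} to an explicit $\FF_q$-linear surjection $\FF_q^n\to\FF_q^k$ chosen so that it carries the Hamming connecting set onto~$S$. First I would record that $H(n,q)=\Cay(\FF_q^{n+},T)$, where $T$ is the (simple) set of all weight-$1$ vectors of~$\FF_q^n$: two $n$-words lie at Hamming distance~$1$ exactly when their difference has weight~$1$, and $T=-T$ since negation preserves weight, so this Cayley graph is well defined and simple. Writing $e_1,\dots,e_n$ for the standard basis, $T$ is the disjoint union of the blocks $T_i=\{\alpha e_i:\alpha\in\FF_q^{\ast}\}$ (the weight-$1$ vectors supported on coordinate~$i$), so $|T|=n(q-1)$.

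The heart of the argument is to split~$S$ into $n$ blocks matching the $T_i$, and this is exactly where $q\in\{2,3\}$ enters. If $q=2$, simply enumerate the multiset $S=\{s_1,\dots,s_n\}$ (here $n=|S|$). If $q=3$, then $-1$ is fixed-point-free on the nonzero vectors of~$\FF_3^k$ (since $2s=0$ forces $s=0$ in characteristic~$3$), so the condition $S=-S$ together with $0\notin S$ forces $|S|$ to be even and lets one write $S$ as a disjoint union $\{s_1,-s_1\}\cup\dots\cup\{s_n,-s_n\}$ of negation-pairs as a multiset, $n=|S|/2$; in both cases $n=|S|/(q-1)$ is an integer. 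Now let $\varphi\colon\FF_q^n\to\FF_q^k$ be the linear map with $\varphi(e_i)=s_i$ for all~$i$. Because $\FF_2^{\ast}=\{1\}$ and $\FF_3^{\ast}=\{1,-1\}$, we get $\varphi(T_i)=\{\alpha s_i:\alpha\in\FF_q^{\ast}\}$, hence $\varphi(T)=\bigcup_i\varphi(T_i)=S$ as multisets. Linearity makes $\varphi$ a homomorphism of the additive groups, and provided $S$ generates $\FF_q^k$ (which is necessary for the statement anyway, since a connected graph such as $H(n,q)$ admits a covering only onto a connected graph), the map $\varphi$ is onto. Then Lemma~\ref{1l:cay-cov}, applied with the multiset $T$ in the role of its connecting set, yields that $\varphi$ is a covering of $\Cay(\FF_q^k,\varphi(T))=\Cay(\FF_q^k,S)$ by $\Cay(\FF_q^n,T)=H(n,q)$, and $\varphi$ is the asserted onto homomorphism.

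The computation is short, so the main point to watch is the block decomposition of~$S$, and that step genuinely breaks for $q\ge 4$: the blocks $T_i$ then have $q-1\ge 3$ elements, and fixing $\varphi(e_i)=s_i$ forces $\varphi(T_i)=\{\alpha s_i:\alpha\in\FF_q^{\ast}\}\subseteq S$, which would require $S$ to be closed under every scalar multiplication rather than merely under negation. One should also keep track of multiplicities throughout, so that the equalities $\varphi(T)=S$ and $\Cay(\FF_q^k,\varphi(T))=\Cay(\FF_q^k,S)$ are read as equalities of multisets, not just of underlying sets.
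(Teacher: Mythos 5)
Your proof is correct and matches the paper's intent exactly: the corollary is stated without a separate proof as an immediate application of Lemma~\ref{1l:cay-cov}, with $H(n,q)$ realized as $\Cay(\FF_q^{n+},T)$ for $T$ the set of weight-one vectors and $\varphi$ the linear map sending the standard basis to representatives of the negation-pairs (collinearity classes) of $S$. Your two side remarks --- that $q\in\{2,3\}$ is needed precisely so that the collinearity class of $s_i$ is just $\{s_i,-s_i\}$, and that $S$ must generate $\FF_q^{k}$ for the map to be onto --- are accurate readings of the implicit hypotheses.
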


It is also known that 
\begin{lemma}\label{1l:conc}
The existence
    of an equitable $S$-partition of a graph $G$ implies
the existence of an equitable $S$-partition
of any graph~$G$ that covers~$G$.
\end{lemma}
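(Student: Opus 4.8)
The plan is to pull the given partition back along the covering map and check that it remains equitable with the same quotient matrix. Let $\varphi\colon \tilde G\to G$ be a covering of $G$ by $\tilde G$; by Definition~\ref{1def:covering} it is a surjective graph homomorphism that is bijective on the set of edges incident to any fixed vertex, so in particular, for every vertex $\tilde x$ of $\tilde G$ the map $\varphi$ restricts to a bijection from the neighbourhood $N(\tilde x)$ onto the neighbourhood $N(\varphi(\tilde x))$. Let $\overline P=(C_0,\ldots,C_{k-1})$ be the given equitable $S$-partition of $G$ and put $\tilde C_i:=\varphi^{-1}(C_i)$ for $i=0,\ldots,k-1$.

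First I would verify that $(\tilde C_0,\ldots,\tilde C_{k-1})$ is an ordered partition of $V(\tilde G)$ into nonempty cells: the cells are pairwise disjoint and cover $V(\tilde G)$ because $\varphi$ is a function and the $C_i$ partition $V(G)$, and each $\tilde C_i$ is nonempty because $\varphi$ is onto and $C_i\neq\varnothing$.

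The main step is then the local count. Fix indices $i,j$ and a vertex $\tilde x\in\tilde C_i$, and write $x:=\varphi(\tilde x)\in C_i$. Then
$$|N(\tilde x)\cap\tilde C_j|=\bigl|N(\tilde x)\cap\varphi^{-1}(C_j)\bigr|=\bigl|\varphi\bigl(N(\tilde x)\bigr)\cap C_j\bigr|=|N(x)\cap C_j|=S_{i,j},$$
where the second equality uses injectivity of $\varphi$ on $N(\tilde x)$ together with the equivalence $\tilde y\in\varphi^{-1}(C_j)\iff\varphi(\tilde y)\in C_j$, the third uses that $\varphi$ maps $N(\tilde x)$ onto $N(x)$, and the last is the defining property of the equitable $S$-partition $\overline P$. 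Hence every colour-$i$ vertex of $\tilde G$ has exactly $S_{i,j}$ neighbours of colour $j$, i.e.\ $(\tilde C_0,\ldots,\tilde C_{k-1})$ is an equitable $S$-partition of $\tilde G$.

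Since the whole argument is a short diagram chase, I do not expect a genuine obstacle; the only delicate point is the local-bijectivity clause in the definition of a covering, which is exactly what guarantees that the neighbourhood counts are transported unchanged from $G$ to $\tilde G$. For multigraphs the same computation is phrased in terms of incident edges rather than adjacent vertices and goes through verbatim.
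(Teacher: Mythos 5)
Your proof is correct; the paper states this lemma without proof (introducing it with ``It is also known that''), and your pullback argument --- preimages of the cells under the covering map, with the local bijectivity on neighbourhoods transporting the counts $S_{i,j}$ unchanged --- is exactly the standard argument one would supply. The one point worth being explicit about, which you do touch on in your closing remark, is that identifying ``bijective on incident edges'' with ``bijective on neighbourhoods'' uses simplicity of the graphs, and the multigraph version should indeed be phrased via incident edges as you say.
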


By general algebraic arguments, it is easy to see that
Cayley graphs $\Cay(\FF_q^n,S)$, $q=2,3$, correspond
to vector subspaces (linear codes) of $\FF_q^n$, 
$(q-1)n=|S|$ with minimum Hamming distance between elements (codewords) at least~$3$.
By classifying small codes with the software
\cite{QextNewEdition}, we classify small Cayley graphs
with the corresponding parameters.

The corresponding between Cayley graphs and linear codes is as follows.
For a $k\times n$ matrix $H$ over $\FF_q$
without two collinear columns,
we denote by $G(H)$ the \emph{syndrome graph},
whose vertex set is $\FF_q^k$ and two vectors are adjacent if and only if their difference is collinear to a column of~$H$. Clearly, $G(H)$ is a Cayley graph
on $\FF_q^k$ where the connecting set is the set
of all nonzero vectors collinear to one of the columns.
On the other hand, the kernel of~$H$ is a 
vector subspace of $\FF_q^n$, a linear code $C$ with 
minimum Hamming distance at least~$3$ (distances~$1$ and $2$ are impossible because there are no two collinear columns in~$H$); the matrix~$H$ is known as a \emph{check} matrix of the code~$C$.

\section{Constructions of CR codes}
\label{s:constr}

\begin{proposition}\label{p:+t}
  If there is an equitable partition of $H(n,q)$ with 
quotient matrix $S$, then for every nonnegative integer~$t$ there is an equitable 
partition of $H(n+t,q)$ with 
quotient matrix $S+t(q-1)\mathrm{Id}$.
In particular, if there is a CR code with intersection array $A$ in $H(n,q)$, then $A$-CR code exists 
in $H(n+t,q)$.
\end{proposition}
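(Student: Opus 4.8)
The plan is to exploit the Cartesian‑product structure of Hamming graphs. Since the Cartesian product is associative, $H(n+t,q)$ is the Cartesian product of $H(n,q)$ and $H(t,q)$, so its vertices may be written as pairs $(x,y)$ with $x\in\FF_q^n$, $y\in\FF_q^t$, and $(x,y)$ is adjacent to $(x',y')$ exactly when either $x=x'$ and $y$ is adjacent to $y'$ in $H(t,q)$, or $x$ is adjacent to $x'$ in $H(n,q)$ and $y=y'$. Write the given equitable partition of $H(n,q)$ as $(C_0,\dots,C_{k-1})$ with quotient matrix $S=(S_{i,j})$, and define on $H(n+t,q)$ the cells $C_i'=\{(x,y):x\in C_i\}$. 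These are nonempty (the $C_i$ being nonempty) and partition the vertex set, so $(C_0',\dots,C_{k-1}')$ is an ordered partition; it remains to count, for a vertex $(x,y)\in C_i'$, its neighbors in each $C_j'$.

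By the adjacency rule, the neighbors of $(x,y)$ split into the $t(q-1)$ pairs $(x,y')$ with $y'$ adjacent to $y$ in $H(t,q)$, which all keep the first coordinate in $C_i$ and hence lie in $C_i'$, and the pairs $(x',y)$ with $x'$ adjacent to $x$ in $H(n,q)$, whose membership in $C_j'$ depends solely on whether $x'\in C_j$; since $x\in C_i$ has exactly $S_{i,j}$ neighbors in $C_j$ within $H(n,q)$, there are exactly $S_{i,j}$ of the latter in $C_j'$. Thus $(x,y)$ has $S_{i,j}$ neighbors in $C_j'$ for $j\neq i$ and $S_{i,i}+t(q-1)$ neighbors in $C_i'$, independently of the choice of $(x,y)\in C_i'$. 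Hence $(C_0',\dots,C_{k-1}')$ is equitable with quotient matrix $S+t(q-1)\mathrm{Id}$. (Alternatively: the coordinate projection $\FF_q^{n+t}\to\FF_q^n$ is, by Lemma~\ref{1l:cay-cov}, a covering of the graph obtained from $H(n,q)$ by adding $t(q-1)$ loops at every vertex, and an equitable $S$-partition of $H(n,q)$ is an equitable $(S+t(q-1)\mathrm{Id})$-partition of that graph; now apply Lemma~\ref{1l:conc}.)

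For the ``in particular'' clause, let the original partition be the distance partition of a CR code $C=C_0$ in $H(n,q)$, so $C_i=\{x:d(x,C)=i\}$, with intersection array $A$. As the Hamming distance is additive over the two factors and $d(y,y)=0$, we get $d((x,y),C_0')=\min_{x_0\in C_0}d(x,x_0)=d(x,C)$, so the distance partition of $H(n+t,q)$ with respect to $C_0'$ is exactly $(C_0',\dots,C_{k-1}')$, which we have just shown to be equitable; its quotient matrix $S+t(q-1)\mathrm{Id}$ is still tridiagonal and has the same off‑diagonal entries, that is, the same intersection array $A$, only the diagonal being adjusted to the larger degree $(n+t)(q-1)$. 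Hence $C_0'$ is an $A$-CR code in $H(n+t,q)$. I do not anticipate a genuine obstacle: the only points requiring care are the bookkeeping of the two adjacency types in the product and the verification that the enlarged partition is literally the distance partition of the enlarged code (so that the conclusion is about a CR code, not merely an equitable partition); the case $t=0$ is trivial.
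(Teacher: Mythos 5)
Your proof is correct. The paper states Proposition~\ref{p:+t} without proof (it is a standard fact; only Propositions~\ref{p:*s} and~\ref{p:split} are proved there), and your argument via the Cartesian-product decomposition $H(n+t,q)=H(n,q)\,\Box\,H(t,q)$, lifting each cell to its preimage under the coordinate projection, is exactly the natural argument one would supply: the neighbor count, the observation that only the diagonal of the quotient matrix changes, and the verification that the lifted partition is literally the distance partition of the lifted code are all handled correctly.
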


\begin{proposition}\label{p:*s}
  If there is an equitable partition of $H(n,q)$ with 
quotient matrix~$S$, then for every positive integer~$s$ there is an equitable 
partition of $H(sn,q)$ with 
quotient matrix $sS$.
In particular, if there is a CR code with intersection array $A$ in $H(n,q)$, then $sA$-CR code exists 
in $H(sn,q)$.
\end{proposition}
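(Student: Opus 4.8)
The plan is to construct the desired equitable partition of $H(sn,q)$ directly from the given one on $H(n,q)$, using the natural product structure. Think of the vertex set of $H(sn,q)$ as $s$-tuples $(x_1,\dots,x_s)$ with each $x_i$ a vertex of $H(n,q)$; equivalently, $H(sn,q)$ is the Cartesian product of $s$ copies of $H(n,q)$. Given the equitable partition $(C_0,\dots,C_{k-1})$ of $H(n,q)$ with quotient matrix $S$, define a coloring of $H(sn,q)$ by a rule that depends only on which cells the coordinates $x_1,\dots,x_s$ lie in. The key observation is that if we color $(x_1,\dots,x_s)$ by the \emph{multiset} of colors $\{f(x_1),\dots,f(x_s)\}$, we obtain a perfect coloring whose quotient matrix is a known ``symmetric power'' of $S$; however, this is finer than we want. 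Instead, I would look for a way to \emph{merge} the cells of this product coloring so that the resulting quotient matrix is exactly $sS$.

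The cleanest route is to exploit the eigenvalue/algebraic description of equitable partitions. An ordered partition is equitable with quotient matrix $S$ iff the characteristic vectors of its cells span an $S$-invariant subspace $U$ of the vertex space on which the adjacency operator acts as $S^{\mathrm{T}}$ (acting on the right on row vectors, or transpose conventions as appropriate). For the Cartesian product, the adjacency operator of $H(sn,q)$ is $A\otimes I\otimes\cdots\otimes I + \cdots + I\otimes\cdots\otimes I\otimes A$ in the obvious tensor decomposition, where $A=A(H(n,q))$. So I would take the subspace $U$ associated to the given partition of $H(n,q)$ and form the ``symmetrized'' subspace $W$ inside $\bigotimes^s \mathbb{R}^{|V(H(n,q))|}$ consisting of symmetric tensors built from $U$; the adjacency operator restricted to $W$ should act as $s$ copies of $S$ added along coordinates, and the point is to check that a suitable $s$-fold symmetric combination collapses this action to exactly $sS$ on a $k$-dimensional space spanned by genuine $\{0,1\}$ characteristic vectors of a partition.

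Concretely, I expect the right partition to be: color a vertex $(x_1,\dots,x_s)$ of $H(sn,q)$ by $i$ if $f(x_j)=i$ for \emph{all} $j$ — but of course that does not partition the whole vertex set. The honest construction must be different, and the natural candidate that does work is a ``weighted'' or ``coordinate-shifted'' version: one picks the partition of $H(sn,q)$ induced by composing the covering maps. Recall $H(sn,q)$ is a Cayley graph on $\FF_q^{sn+}$ and the ``diagonal sum'' map $\sigma\colon \FF_q^{sn}\to\FF_q^n$, $(x_1,\dots,x_s)\mapsto x_1+\cdots+x_s$, is a surjective group homomorphism; by Lemma~\ref{1l:cay-cov} and the corollary, $\sigma$ realizes $H(n,q)$ (with connecting set the $s$-fold union of copies of the standard generators, i.e.\ with each generator appearing with multiplicity $s$) as a quotient of $H(sn,q)$. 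That multigraph quotient is exactly $H(n,q)$ with every edge taken $s$ times, whose adjacency matrix is $sA$, and whose equitable $S$-partitions are precisely the equitable $sS$-partitions in the scaled sense. Pulling back the given equitable $S$-partition through $\sigma$ via Lemma~\ref{1l:conc} then yields an equitable partition of $H(sn,q)$; one checks its quotient matrix is $sS$ because each of the $s$ ``copies'' of each generator moves a vertex the same way the single generator does downstairs, multiplying every off-diagonal transition count by $s$ (and, since the graph degree scales by $s$, the diagonal entries scale by $s$ as well, so the whole matrix is $sS$).

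The main obstacle is making the identification ``$H(sn,q)/\sigma = s\cdot H(n,q)$'' precise and verifying that Lemma~\ref{1l:conc} applies to this multigraph covering — i.e.\ confirming that an equitable $S$-partition of the simple graph $H(n,q)$ is the same data as an equitable $S$-partition of the multigraph $sH(n,q)$ once one passes to the scaled quotient matrix, and that the covering lemma is stated for multigraphs (the paper notes its concepts extend to multigraphs). Once that bookkeeping is in place, the final sentence about CR codes is immediate: a $\{b_0,\dots;c_1,\dots\}$-CR code in $H(n,q)$ is an equitable partition with a specific tridiagonal $S$, so scaling gives the equitable $sS$-partition, whose intersection array (reading off the off-diagonal entries) is exactly $sA$, and the covering-radius length $\rho$ is unchanged, so we get an $sA$-CR code in $H(sn,q)$.
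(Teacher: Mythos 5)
Your construction is the same as the paper's: the cells of the new partition are the preimages $C'_i=\{(x_1,\dots,x_s)\in(\FF_q^n)^s:\ x_1+\cdots+x_s\in C_i\}$ under the block-sum homomorphism, and the quotient matrix is $sS$ because each neighbor $x+e$ of $x=x_1+\cdots+x_s$ downstairs lifts to exactly $s$ neighbors upstairs (one per block). The paper simply verifies this $s$-fold lifting count directly rather than routing it through the multigraph-covering lemmas, but the underlying argument is identical.
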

\begin{proof}
If $(C_0,\ldots,C_k)$ is an equitable $S$-partition
of $H(n,q)$, then
$(C'_0,\ldots,C'_k)$,
where 
$$C'_i =  \{ (x_0,\ldots,x_{s-1})\in (\ZZ_q^n)^s:\ x_0+\ldots+x_{s-1} \in C_i \},$$
is an equitable $sS$-partition
of $H(sn,q)$. Indeed, if $x\in C_i$ has a neighbor
$x+e$ in $C_j$,
then every $(x_{0},\ldots,x_{s-1})\in C'_i$ such that
$y^{(l)}=x_0+\ldots+x_{s-1}=x$ has $s$ corresponding neighbors
$(x_1,\ldots,x_{l-1},x_l+e,x_{l+1},\ldots,x_s)$,
$l=0,...,s-1$ in~$C'_j$.
\end{proof}

\begin{proposition}[splitting {\cite[Th.\,4.13]{BKMTV}}]\label{p:split}
  If there is an equitable $[[a,b],[c,d]]$-partition $(C_0,C_1)$, $a\le c$, 
  in $H(n,q)$ ($a+c=b+d=(q-1)n$),
  where $q$ is a power of a prime,
  then in $H(qn+c-a,q)$
  there is an equitable $2$-partition 
  with quotient matrix 
$\begin{pmatrix}
    a+(i{-}1)c & (q{-}i)c + qb \\
    ic & qb+a+(q{-}i)c
\end{pmatrix}
$,
$i=1,...,q-1$.
\end{proposition}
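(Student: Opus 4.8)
The plan is to reduce Proposition~\ref{p:split} to Proposition~\ref{p:*s} and Proposition~\ref{p:+t} by first producing a single equitable partition in an intermediate Hamming graph and then applying the two recursive operations already available. The starting point is an equitable $[[a,b],[c,d]]$-partition $(C_0,C_1)$ of $H(n,q)$; the role of the hypothesis $a+c=b+d=(q-1)n$ is exactly that $a+b$ and $c+d$ are the valences seen from each class, and the matrix is a genuine quotient matrix of a $q$-regular-in-each-coordinate structure.

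First I would pass to $H(qn,q)$ via Proposition~\ref{p:*s} with $s=q$: this yields an equitable partition with quotient matrix $q\cdot[[a,b],[c,d]] = [[qa,qb],[qc,qd]]$. The key observation is that the partition $(C'_0,C'_1)$ constructed there, namely $C'_i=\{(x_0,\dots,x_{q-1}):\sum x_\ell\in C_i\}$, has additional structure coming from the linear map $(x_0,\dots,x_{q-1})\mapsto\sum x_\ell$; in particular one can \emph{refine} it by intersecting with the fibers of suitable coordinate functionals. The heart of the argument — and the step I expect to be the main obstacle — is to show that an appropriate refinement of $C'_0$ (splitting off part of it according to the value of one coordinate, or a linear combination of coordinates, which is only legitimate because $q$ is a prime power so that $\FF_q$-linear functionals behave well) is again equitable, and that its quotient matrix has the stated form up to adding a scalar multiple of $\mathrm{Id}$. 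This is where the parameter $i$ enters: the $q-1$ nonzero values that a linear functional can take on the part being split correspond to the $q-1$ matrices in the family, and the asymmetry $a\le c$ guarantees that the cell being enlarged stays nonempty and that all entries remain nonnegative.

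Concretely, I would look for a subset $D\subset C'_0$ that is a union of fibers of a coordinate projection (or of an affine linear functional vanishing on the relevant directions) such that, relabelling $(D,\ (V\setminus D))$ as a $2$-partition, a codeword in $D$ sees $a+(i-1)c$ neighbours in $D$ and the complementary counts work out; the internal consistency $a+(i-1)c + (q-i)c+qb = qb+a+(q-i)c + ic$ (both sides equal $qb+a+(q-1)c$, the valence of $H(qn+c-a,q)$) is the sanity check that pins down the off-diagonal entries. Finally, to land in $H(qn+c-a,q)$ rather than $H(qn,q)$, I would invoke Proposition~\ref{p:+t} with $t=c-a\ge 0$, which adds $t(q-1)\mathrm{Id}=(c-a)(q-1)\mathrm{Id}$ to the quotient matrix; choosing the intermediate matrix so that after this shift the diagonal becomes exactly $a+(i-1)c$ and $qb+a+(q-i)c$ completes the proof. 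The delicate points to get right are (i) verifying that the refinement is equitable — this presumably is the content of \cite[Th.\,4.13]{BKMTV} and uses the prime-power hypothesis essentially — and (ii) bookkeeping the $+t(q-1)\mathrm{Id}$ shift so that the off-diagonal entries $(q-i)c+qb$ and $ic$ come out unchanged while the diagonal absorbs $c-a$ in each coordinate.
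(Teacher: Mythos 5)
Your first step (Proposition~\ref{p:*s} with $s=q$) and the idea of refining $C'_0$ along the fibers of an $\FF_q$-linear functional are exactly what the paper does. But your plan for the remaining $c-a$ coordinates has a genuine gap: you propose to merge cells inside $H(qn,q)$ and then apply Proposition~\ref{p:+t} with $t=c-a$, and neither half of that works. After the refinement, the $(q{+}1)$-cell partition $(C^{\prime 0}_0,\ldots,C^{\prime q-1}_0,C'_1)$ has quotient matrix with \emph{every} entry among the $q$ subcells equal to $a$ and with $c$ in each subcell-column of the $C'_1$-row. If you take $D$ to be a union of $i$ subcells, a vertex of a subcell sees $ia$ neighbours in $D$ while a vertex of $C'_1$ sees $ic$; since both kinds of vertices lie in $V\setminus D$, the $2$-partition $(D,V\setminus D)$ is not equitable unless $a=c$. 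So no merge is possible yet, and in particular no subset of $C'_0$ can see $a+(i{-}1)c$ internal neighbours inside $H(qn,q)$. The missing $(i{-}1)(c{-}a)$ must be created by the extra coordinates, and it must land \emph{off} the diagonal — which Proposition~\ref{p:+t} can never do, since it only adds $t(q{-}1)\mathrm{Id}$.

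The paper's actual mechanism for the last $c-a$ coordinates is different and essential: each new coordinate $y$ is used to cyclically permute the subcells, $D^{i}_0=\{(x,y):x\in C^{\prime\, i+y}_0\}$, so that the $q-1$ new neighbours of a vertex of $D^i_0$ fall one into each $D^j_0$, $j\ne i$. Each such step therefore adds $1$ to every off-diagonal entry among the subcells (and $q-1$ to the diagonal entry of $C'_1$), and after $c-a$ steps the off-diagonal entries reach $c$, at which point the rows become compatible and the merge into a $2$-partition is legitimate. This is the step you would need to supply; it is not a corollary of Propositions~\ref{p:+t} and~\ref{p:*s}. (A smaller issue: your row-sum ``sanity check'' does not actually hold for the matrix as printed — the two row sums of the displayed quotient matrix differ by $c$, the bottom-right entry should read $qb+a+(q{-}i{-}1)c$ — so carrying out that computation would have been informative.)
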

\begin{proof}
We describe the construction for the case when
$q$ is prime. For a prime power case,
the construction is the same but one need to use
$\FF_q$ instead of $\ZZ_q$.
We start with constructing an equitable $\begin{pmatrix}qa&qb\\qc&qd\end{pmatrix}$-partition $(C'_0,C'_1)$ as in the proof of Proposition~\ref{p:*s}, where $s=q$.
Then, we split $C'_0$ into $q$ subcells
$C^{\prime 0}_0$, \ldots, $C^{\prime q-1}_0$,
according to the value of
$\alpha(y_0,\ldots,y_{q-1})=1\cdot|y_1|+ \ldots + (q-1)\cdot |y_{q-1}|$
for each $(y_0,\ldots,y_{q-1})$ in~$C'_0$.
It is not difficult to see that all $q$
vectors $y^{(l)}$, $l=0,...,s-1$ have distinct
values of $\alpha(y^{(l)})$.
It follows that any vertex of $H(qn,q)$ has equal numbers of neighbors in $C^{\prime 0}_0$, \ldots, $C^{\prime q-1}_0$, and
$(C^{\prime 0}_0, \ldots, C^{\prime q-1}_0,C^{\prime}_1)$ is an equitable partition with
quotient matrix 
$$
\begin{pmatrix}
    a & \cdots & a & qb \\
    \vdots & \ddots & \vdots & \vdots \\
    a & \cdots & a & qb \\
    c & \cdots & c & qd
\end{pmatrix}.
$$
Next, we construct a partition 
$(D^{ 0}_0, \ldots, D^{ q-1}_0,D_1)$ of $H(qn+1,q)$
as follows: 
$$ D^{ i}_0 =
\{(x,y) \in \ZZ_q^{qn+1}:\ 
y \in \ZZ_q,\ x\in C^{\prime i+y}_0\},$$
$$ D_0 =
\{(x,y) \in \ZZ_q^{qn+1}:\ 
y \in \ZZ_q,\ x\in C_0\}.$$
It is an equitable partition with quotient matrix
$$
\begin{pmatrix}
    a & a{+}1 & \cdots & a{+}1 & qb \\
    a{+}1 & a & \cdots & a{+}1 & qb \\
    \vdots & \vdots & \ddots & \vdots & \vdots \\
    a{+}1 & a{+}1 & \cdots & a & qb \\
    c & c &  \cdots & c & qd+q{-}1
\end{pmatrix}.
$$
Indeed, a vertex $x$ in $D^{ i}_0$ 
sees one neighbor in each $D^{j}_0$, $j\ne i$,
differing with $x$ in the last coordinate.

Repeating the last step $c-a$ times, we obtain
an equitable partition $(E^{0}_0, \ldots, E^{ q-1}_0,E_1)$
of $H(nq{+}c{-}a,q)$
with quotient matrix
$$
\begin{pmatrix}
    a & c & \cdots & c & qb \\
    c & a & \cdots & c & qb \\
    \vdots &\vdots &  \ddots & \vdots & \vdots \\
    c & c & \cdots & a & qb \\
    c & c &  \cdots & c & qd+(q{-}1)(c{-}a)
\end{pmatrix}.
$$
Now, because of equal coefficients 
in cells of the quotient matrix,
we can get an equitable $2$-partition
by unifying the first $i$ cells and
the remaining $q-i+1$ cells, $i=1,...,q-1$.
The resulting quotient matrix is
$$
\begin{pmatrix}
    a+(i{-}1)c & (q{-}i)c + qb \\
    ic & qb+a+(q{-}i)c
\end{pmatrix}.
$$
\end{proof}

\begin{remark}
The general splitting construction
(see \cite[Sect.\,3]{FDF:PerfCol} for $q=2$ and \cite[Th.\,4.14]{BKMTV} in general)
is more complicate and can sometimes get a better result
if the first cell $C_0$ of the original partition can be split into $s$-faces
(sets inducing $H(s,q)$-subgraphs).
Since we could not apply this improvement
in the current work, we show in 
Proposition~\ref{p:split} only the simplified version of the construction,
and give a simplified proof.
\end{remark}

\begin{corollary}
If there is an independent
$\{c;(q{-}1)n\}$-CR set in $H(n,q)$,
then there is an independent
$\{c;(q{-}1)(c{+}qn)\}$-CR set in $H(qn{+}c,q)$.
\end{corollary}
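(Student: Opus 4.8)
The plan is to obtain the statement as the special case $a=0$, $i=1$ of the splitting construction of Proposition~\ref{p:split}, applied to the distance partition of the given code.

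First I would take an independent $\{c;(q-1)n\}$-CR set $C$ in $H(n,q)$ and consider its distance partition $(C_0,C_1)=(C,C^{(1)})$. Since $C$ is a coclique, a vertex of $C_0$ has no neighbour in $C_0$, so $S_{0,0}=0$ and hence $S_{0,1}=(q-1)n$ (the degree of $H(n,q)$); since the covering radius equals $1$, the cell $C^{(1)}$ is nonempty and each of its vertices has at least one, hence by equitability exactly $c\ge 1$, neighbours in $C_0$, so $S_{1,0}=c$ and $S_{1,1}=(q-1)n-c$. In the notation of Proposition~\ref{p:split} this is an equitable $[[0,(q-1)n],[c,(q-1)n-c]]$-partition of $H(n,q)$, and the relevant hypotheses ($a=0\le c$, regularity of $H(n,q)$, and $q\in\{2,3\}$ a prime) are met, so the proposition applies.

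Next I would run the construction of Proposition~\ref{p:split} with $a=0$, $b=(q-1)n$, $c$ and $d=(q-1)n-c$ as above, selecting the branch $i=1$. It produces an equitable $2$-partition $(C_0',C_1')$ of $H(qn+c-a,q)=H(qn+c,q)$ whose quotient matrix has $S'_{0,0}=a+(i-1)c=0$, $S'_{1,0}=ic=c$, and $S'_{0,1}=(q-i)c+qb=(q-1)c+q(q-1)n=(q-1)(c+qn)$, the remaining entry being forced by the $(q-1)(qn+c)$-regularity of $H(qn+c,q)$. Finally I would check that $(C_0',C_1')$ is again the distance partition of its first cell: $S'_{0,0}=0$ says that $C_0'$ is a coclique, while $S'_{1,0}=c\ge 1$ together with $C_1'\ne\emptyset$ says that every vertex of $C_1'$ lies at distance exactly $1$ from $C_0'$, so $\rho(C_0')=1$. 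Hence $C_0'$ is an independent completely regular set in $H(qn+c,q)$ with intersection array $\{c;(q-1)(c+qn)\}$, which is exactly the assertion.

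The only step that is more than bookkeeping is pinning down the parameters: one must notice that feeding an independent code ($a=0$) into Proposition~\ref{p:split} and taking the branch $i=1$ keeps the top-left entry of the output quotient matrix equal to $0$, which is precisely what makes the output partition once more a distance partition of a coclique. Once this is seen, the identity $(q-i)c+qb=(q-1)(c+qn)$ at $i=1$, $b=(q-1)n$, and the observation that $C_1'$ sits at distance $1$ from $C_0'$, are immediate, and the rest is inherited from the already-established Proposition~\ref{p:split}.
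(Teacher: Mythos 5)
Your proposal is correct and matches the paper's intent exactly: the corollary is stated immediately after Proposition~\ref{p:split} with no separate proof precisely because it is the specialization $a=0$, $b=(q-1)n$, $d=(q-1)n-c$, $i=1$ of that construction, which is what you carry out, including the check that the resulting cell is again an independent CR-$1$ set. Your only silent adjustments --- reading the hypothesis's row sums as $a+b=c+d=(q-1)n$ and the independent code's array with $S_{0,1}$ equal to the degree --- are the correct readings of the surrounding notation.
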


\newpage
\section[\{10,8\}-CR code in H(7,3)]{$\{10,8\}$-CR code in $H(7,3)$}
\label{s:10,8}
The first open question
for parameters of CR-$1$ codes in $H(n,3)$ was the existence of a $\{10,8\}$-CR code in $H(7,3)$.
There are no Cayley SRG over $\FF_3$
with valency~$14$. So, we searched 
for a $\{10,8\}$-CR code
in all simple Cayley graph on $\FF_3^4$
(all such graphs were enumerated by enumerating all linear
$[7,3,3]_3$ codes with
\cite{QextNewEdition}).
A CR code with quotient matrix $\left(\begin{smallmatrix}4&10\\8&6\end{smallmatrix}\right)$
was found in the coset graph $G(H)$ of the code with check matrix
$$
H = \left(\begin{array}{ccccccc}
1&0&0&0&1&1&1 \\
0&1&0&0&1&0&0 \\
0&0&1&0&0&1&0 \\
0&0&0&1&0&0&1
\end{array}\right).
$$

\section[\{21,4;2,21\}-CR code in H(11,3)]{$\{21,4;2,21\}$-CR code in $H(11,3)$}
\label{s:21,4,2,21}
There are $401$ nonisomorphic connected Cayley graphs of degree $2\cdot 11$ 
on $\FF_3^4$. Checking them all, 
we have discovered the following finding.
One (and only one) of these graphs, the coset graph $G(H)$
of the code $C(H)$ with check matrix 
$$
H = \left(\begin{array}{c@{\ \ }c@{\ \ }c@{\ \ }cc@{\ \ }c@{\ \ }c@{\ \ }cc@{\ \ }c@{\ \ }c}
1&0&0&0&0&1&1&1&1&2&2 \\
0&1&0&0&1&0&1&1&2&1&2 \\
0&0&1&0&1&1&0&1&2&2&1 \\
0&0&0&1&1&1&1&0&1&1&1
\end{array}\right),
$$
has a $\{21,4;2,21\}$-CR code
$C=\{0000, 2211, 1012, 0121, 1220, 2102\}$,
with quotient matrix 
$$\begin{pmatrix}
    1&21&0\\2&16&4\\0&21&1
\end{pmatrix}.$$
The graph has diameter $2$, spectrum 
$\{22^1,10^2,4^{12},1^{44},-5^{14},-8^8\}$;
the stabilizer of a vertex~$o$ has order $48$ and
orbits of sizes 
$1$, $8$, $8$, $6$, $12$, $8$, $24$, $12$, $2$
($8$, $8$, $6$ for neighbors of~$o$, in particular,
the code has tree coordinate orbits,
$\{0,1,2,3\}$, $\{4,5,6,7\}$, and $\{8,9,10\}$,
and the graph has tree corresponding edge orbits).
$\Aut(C)$ has order $24$, 
acts transitively on $C$ and $C^{(2)}$ and divides $C^{(1)}$ into $4$ orbits of sizes $3$, $12$, $24$, $24$.
$C^{(2)}$ is uniquely divided into two other $\{21,4;2,21\}$-CR codes $C'$ and $C''$; the partition
$(C,C',C'',C^{(1)})$ is hence equitable with quotient matrix
$$\begin{pmatrix}
    1&0&0&21\\
    0&1&0&21\\
    0&0&1&21\\
    2&2&2&16\\
\end{pmatrix}.$$

\newpage
\section[\{28;8\}-CR code in H(11,3)]{$\{28;8\}$-CR code in $H(15,3)$}\label{s:28-8}
A $\left(\begin{smallmatrix}2&28\\8&22\end{smallmatrix}\right)$-CR code
exists in the coset graphs $G$ of the codes with the following check matrices:
$$
\left(\begin{array}{ccccccccccccccc}
0&1&1&1&1&1&1&1&1&1&1&1&0&0&0\\
1&0&0&0&0&1&1&1&1&2&2&0&1&0&0\\
0&0&0&1&1&0&0&1&1&0&2&0&0&1&0\\
1&1&2&0&1&0&2&0&1&1&2&0&0&0&1\\
\end{array}\right)
$$
$$
\left(\begin{array}{ccccccccccccccc}
1&1&1&1&1&1&1&1&1&1&1&1&0&0&0\\
0&0&0&0&1&1&1&1&2&2&2&0&1&0&0\\
0&1&1&2&0&0&2&2&0&1&1&0&0&1&0\\
2&0&1&0&0&1&0&1&0&0&1&0&0&0&1\\
\end{array}\right)
$$
The inner degree of a $\{28;8\}$-CR code is $2$; 
so, it induces the union of cycles. The lengths
of cycles are $6$, $12$ or $6$, $4$, $4$, $4$
for different found codes (the search was not exhaustive);
however, the lengths of cycles for codes induced in $H(15,3)$
can be larger.

Previously, $\{28;8\}$-CR codes were known in $H(16,3)$,
from a $\{7;2\}$-CR code in $H(4,3)$ (the union of two $1$-perfect codes). It is not clear yet if there is a $\{28;8\}$-CR code (which must be an independent set) in $H(14,3)$.

\section[\{35;10\}- and \{25;20\}- CR codes in H(19,3)]{$\{35;10\}$- and $\{25;20\}$- CR codes in $H(19,3)$}\label{s:35-10}
$\left(\begin{smallmatrix}3&35\\10&28\end{smallmatrix}\right)$-CR 
and
$\left(\begin{smallmatrix}13&25\\20&18\end{smallmatrix}\right)$-CR codes
exist in the coset graph $G$ of the code with the following check matrix (as one of several?):
$$
\left(\begin{array}{c@{\ \ }c@{\ \ }c@{\ \ }c@{\ \ }c@{\ \ }c@{\ \ }c@{\ \ }c@{\ \ }c@{\ \ }c@{\ \ }c@{\ \ }cc@{\ \ }cc@{\ \ }c@{\ \ }cc@{\ \ }c}
0&0&0&0&1&1&1&1&1&1&1&1 &0&0 &1&1 &0&0&0\\
1&1&1&1&0&0&0&0&1&1&1&1 &0&0 &1&0 &1&0&0\\
0&0&1&2&0&0&1&2&0&0&1&2 &1&1 &0&0 &0&1&0\\
1&2&0&0&1&2&0&0&1&2&0&0 &1&2 &0&0 &0&0&1
\end{array}\right).
$$
The graph has $81\cdot 2^4\cdot 72$ automorphisms, $4$ edge orbits
(the corresponding groups of coordinates of the code have sizes $12$, $2$, $3$, $2$).

The subgraph induced by the found $\{35;10\}$-CR code has degree $3$ (as we see from the quotient matrix) and connected components of sizes 
$6$, $4$, $4$, $4$ (so, the last three are cliques).
The code has $3456$ automorphisms and is divided into $2$ orbits of sizes $6$ and $12$ (non-code orbits have sizes $24$, $24$, $12$, $3$).

Previously, $\{35;10\}$- and $\{25;20\}$- CR codes were known in $H(20,3)$,
from $\{7;2\}$-CR and $\{5;4\}$-CR code in $H(4,3)$ (the union of two and four $1$-perfect codes, respectively). It is not clear yet if there is a $\{35;10\}$-CR code (which must be the union of independent edges) in $H(18,3)$ or a $\{25;20\}$-CR code in $H(n,3)$, $n=16,17,18$.

\newpage
\section{SRG(81,20,1,6) and SRG(81,60,45,42)}

There is only $1$ nonequivalent two-weight
$(10,4,\{6,9\})_3$ code (doubly-shortened Golay $(12,6,6)_3$ code), with check matrix
$$ H_1 = 
\begin{pmatrix}
      0&1&1&1&1&1&1&0&0&0 \\
      1&0&1&1&2&2&0&1&0&0 \\
      1&1&0&2&1&2&0&0&1&0 \\
      1&1&2&0&2&1&0&0&0&1 
\end{pmatrix}.
$$
The SRG$(81,20,1,6)$ graph $G(H_1)$ (Brouwer--Haemers graph \cite[10.28]{BvM:SRG}) has CR codes with the following i.a.:\\
eigenvalue $2$:
$\{16,2\}$ \cite{BvM:SRG}, $\{14,4\}$, $\{12,6\}$ \cite{BvM:SRG}, $\{10,8\}$;\\
eigenvalue $-7$:
$\{18;9\}$ \cite{BvM:SRG}, \underline{$\{15;12\}$} \cite{BvM:SRG}\\
covering radius~$2$:
$\{20, 18; 1, 6\}$, 
$\{18, 8; 1, 18\}$,
$\{18, 7; 2, 18\}$,
$\{18, 6; 3, 18\}$,
$\{18, 5; 4, 18\}$.

The complement, SRG$(81,60,45,42)$ has CR codes with the following i.a.:\\
eigenvalue $6$
$\{36;18\}$,
$\{30;24\}$;\\
eigenvalue $-3$:
$\{56;7\}$,
$\{49;14\}$,
$\{42;21\}$,
$\{35;28\}$;\\
covering radius~$2$:
$\{60, 14; 1, 42\}$.

We have the following new parameters of CR-$1$ codes in
$H(n,3)$:
\begin{itemize}
    \item $\{15;12\}$-CR code, $n=10$ (previous value $n=12$, lower bound $n\ge 10$).
\end{itemize}

\section{SRG(243,22,1,2)
          }

There is only $1$ SRG(243,22,1,2), the coset graph of the perfect Golay $(11,6,5)_3$ code.
It has CR codes with the following i.a.:\\
eigenvalue $4$:
$\{16,2\}$, $\{14,4\}$, $\{12,6\}$, $\{10,8\}$;\\
eigenvalue $-5$:
$\underline{\{27-c;c\}}$, $c=\underline{5,{6},{7},8},9,\underline{10,11},{12},\underline{13}$;\\
covering radius~$2$:\\
$\{18, 5; 4, 18\}$,
$\{18, 6; 3, 18\}$,
$\{18, 7; 2, 18\}$,
$\{18, 8; 1, 18\}$,
$\{20, 18; 1, 6\}$,
$\{22, 20; 1, 2\}$
(the putative arrays
$\{20, 10; 2, 13\}$,
$\{21, 10; 2, 12\}$,
$\{22, 2; 4, 17\}$,
$\{22, 9; 2, 12\}$ 
where rejected with the ILP solver CPLEX in GAMS \cite{GAMS}).


We have the following new parameters of CR-$1$ codes in
$H(n,3)$:
\begin{itemize}
    \item $\{27-c;c\}$-CR codes, $c=5,6,7,8,\,10,11,\,13$, $n=11$ (previous values $n=13,12,13,12,\,13,12,\,13$, lower bound $n\ge 11$).
\end{itemize}

\newpage
\section{DRG \{24,22,20;1,2,12\} }

The coset graph $G$ of the unique ternary
Golay $[12,6,6]_3$ code
is a distance-regular graph with
intersection array
$\{24, 22, 20; 1, 2, 12\}$ 
and eigenvalues $-12$, $-3$, $6$, $24$.

Below we list the putative intersection arrays of completely regular codes of covering radius more than~$1$ in 
$G$ meeting the following three conditions: monotonicity, eigenvalues are in $\{-12$, $-3$, $6$, $24\}$, the distance-$2$ and distance-$3$ quotient matrices are integer.

$\{18, 5; 4, 18\}$, eigenvalues: $-3$, $6$, $24$ ++ 

$\{18, 6; 3, 18\}$, eigenvalues: $-3$, $6$, $24$ ++ 

$\{18, 7; 2, 18\}$, eigenvalues: $-3$, $6$, $24$ ++ 

$\{18, 8; 1, 18\}$, eigenvalues: $-3$, $6$, $24$ ++ 

$\{20, 10; 2, 13\}$, eigenvalues: $-3$, $6$, $24$ ??? 

$\{20, 10; 8, 16\}$, eigenvalues: $-12$, $6$, $24$ ?? (GAMS: No solution)

$\{20, 18; 1, 6\}$, eigenvalues: $-3$, $6$, $24$ ++ 

$\{20, 18; 4, 12\}$, eigenvalues: $-12$, $6$, $24$ ??? (GAMS: No solution)

$\{21, 10; 2, 12\}$, eigenvalues: $-3$, $6$, $24$ ???

$\{22, 2; 4, 17\}$, eigenvalues: $-3$, $6$, $24$ ???

$\{22, 9; 2, 12\}$, eigenvalues: $-3$, $6$, $24$ ???

$\{22, 20; 1, 2\}$, eigenvalues: $-3$, $6$, $24$ ++ 

$\{22, 20; 4, 8\}$, eigenvalues: $-12$, $6$, $24$ ??? (GAMS: No solution)

$\{24, 4; 2, 15\}$, eigenvalues: $-3$, $6$, $24$ ???

$\{24, 12; 6, 12\}$, eigenvalues: $-12$, $6$, $24$ ? (GAMS: $\exists$)

$\{24, 14; 1, 6\}$, eigenvalues: $-3$, $6$, $24$ ? (GAMS: $\exists$)

$\{24, 20; 4, 6\}$, eigenvalues: $-12$, $6$, $24$ ? (GAMS: No solution)

$\{24, 22, 20; 1, 2, 12\}$, eigenvalues: $-12$, $-3$, $6$, $24$  $+$

Since $G$ covers SRG$(243,22,1,2)$,
the arrays feasible in SRG$(243,22,1,2)$
also feasible in $G$ (marked as ``$++$'').

\newpage
\section{SRG(81,30,9,12) and SRG(81,50,31,30)}

There are exactly $2$ nonequivalent 
$(15,4,\{9,12\})_3$ codes \cite{HamHel96}, with check matrices as below. The corresponding SRG are discussed in~\cite[\S10.29]{BvM:SRG}.
$$
H_1 = 
\left(
\begin{array}{@{\,}c@{\ }c@{\ }c@{\ }c@{\ }c@{\ }c@{\ }c@{\ }c@{\ }c@{\ }c@{\ }c@{\ }c@{\ }c@{\ }c@{\ }c@{\,}}
1&1&1&0&0&0&1&1&1&1&1&1&1&1&1\\
0&0&0&1&1&1&1&1&1&2&2&2&2&2&2\\
0&1&0&0&1&0&2&1&2&1&2&0&0&1&2\\
0&0&2&0&0&2&1&1&2&0&2&1&2&1&0 
\end{array} \right),\ 
H_2 = 
\left(
\begin{array}{@{\,}c@{\ }c@{\ }c@{\ }c@{\ }c@{\ }c@{\ }c@{\ }c@{\ }c@{\ }c@{\ }c@{\ }c@{\ }c@{\ }c@{\ }c@{\,}}
1&1&1&0&0&0&1&1&1&1&1&1&1&1&1\\
0&0&0&1&1&1&1&1&1&2&2&2&2&2&2\\
0&1&0&0&1&0&2&1&2&0&1&2&0&1&2\\
0&0&2&0&0&2&1&1&2&0&2&1&2&1&0 
\end{array}  \right).
$$

$$
\begin{array}{c|c@{\ }c@{\ }c@{\ }c@{\ }c@{\ }c@{\ }c@{\ }c@{\ }c@{\ }c@{\ }c@{\ }c|c@{\ }c|}
\mathrm{SRG}(81,30,9,12)&\multicolumn{12}{|c|}{\mbox{eigenvalue $3$}}&
\multicolumn{2}{|c|}{\mbox{$-6$}}\\\hline
\rotatebox{90}{$|\mathrm{Aut}|$}&
\rotatebox{90}{$\{25;2\}$} & 
\rotatebox{90}{$\{24;3\}$} & 
\rotatebox{90}{$\{23;4\}$} & 
\rotatebox{90}{$\{22;5\}$} &  
\rotatebox{90}{$\{21;6\}$} & 
\rotatebox{90}{$\{20;7\}$} & 
\rotatebox{90}{$\{19;8\}$} & 
\rotatebox{90}{$\{18;9\}$} & 
\rotatebox{90}{$\{17;10\}$} & 
\rotatebox{90}{$\{16;11\}$} & 
\rotatebox{90}{$\{15;12\}$} & 
\rotatebox{90}{$\{14;13\}$} & 
\rotatebox{90}{$\{24;12\}$} & 
\colorbox[gray]{0.9}{\rotatebox{90}{{$\{20;16\}$}}} \\\hline
116640 & + & + & + & + & + & + & + & + & + & + & + & + & + & - \\
5832 & - & + & - & - & + & - & - & + & - & - & + & - & + & + \\
\hline
\rotatebox{90}{$|\mathrm{Aut}|$}&
\rotatebox{90}{\colorbox[gray]{0.9}{$\{50;4\}$}} & 
\rotatebox{90}{$\{48;6\}$} & 
\rotatebox{90}{\colorbox[gray]{0.9}{$\{46;8\}$}} & 
\rotatebox{90}{$\{44;10\}$} &  
\rotatebox{90}{$\{42;12\}$} & 
\rotatebox{90}{$\{40;14\}$} & 
\rotatebox{90}{$\{38;16\}$} & 
\rotatebox{90}{$\{36;18\}$} & 
\rotatebox{90}{$\{34;20\}$} & 
\rotatebox{90}{$\{32;22\}$} & 
\rotatebox{90}{$\{30;24\}$} & 
\rotatebox{90}{$\{28;26\}$} & 
\rotatebox{90}{$\{30;15\}$} & 
\rotatebox{90}{{$\{25;20\}$}} \\\hline
\mathrm{SRG}(81,50,31,30)&\multicolumn{12}{|c|}{\mbox{eigenvalue $-4$}}&
\multicolumn{2}{|c|}{\mbox{$5$}}
\end{array}
$$
CR-$2$ codes: $G(H_1)$ has $\{30,6;3,24\}$, $\{30,20;1,12\}$;
$G(H_2)$ and both complement graphs
have only trivial $\{30,20;1,12\}$-
and $\{50,18;1,30\}$-CR codes, respectively.

We have the following new parameters of CR-$1$ codes in
$H(n,3)$:
\begin{itemize}
    \item $\{20;16\}$-CR code, $n=15$ (previous value $n=16$, lower bound $n\ge 13$);
    \item $\{46;8\}$-CR code, $n=25$ (previous value $n=26$, lower bound $n\ge 23$);
    \item $\{50;4\}$-CR code, $n=25$ (previous value $n=26$, lower bound $n\ge 25$);
\end{itemize}

\newpage
\section{SRG(81,24,9,6), SRG(81,56,37,42)}
\label{s:24}

There are exactly $2$ nonequivalent ``$\FF_3$-linear'' SRG$(81,24,9,6)$, 
NVO$^+_4(3)=G(H_1)$ ($93312$ automorphisms)
and
OA$(3,\FF_9)$ ($23328$ automorphisms).
See also [W. D. Wallis, Construction of strongly regular graphs using affine designs, Bull. Austral. Math. Soc. 4 (1971) 41–49. Corrigenda, 5 (1971), p. 431. (pp. 16, 371–376,
378, 382, 385–386, 389–391, 395)]

\begin{verbatim}
H1 = [[1,1,1,1,1,1,1,1,1,0,0,0],
      [0,0,0,1,1,1,1,2,0,1,0,0],
      [0,1,1,0,0,1,1,2,0,0,1,0],
      [1,0,1,0,1,0,1,2,0,0,0,1]]
\end{verbatim}
\begin{verbatim}
H2 = [[1,1,1,1,1,1,1,1,1,0,0,0],
      [0,0,0,1,1,1,1,2,0,1,0,0],
      [0,0,1,0,1,1,2,1,0,0,1,0],
      [1,2,0,1,0,1,1,0,0,0,0,1]]
\end{verbatim}

$$
\begin{array}{c|c@{\,}c@{\,}c@{\,}c|c@{\,}c@{\,}c@{\,}c@{\,}c@{\,}c@{\,}c@{\,}c@{\,}c@{\,}c@{\,}c|}
\mathrm{SRG}(81,24,9,6)&\multicolumn{4}{|c|}{\mbox{eigenvalue $6$}}&
\multicolumn{11}{|c|}{\mbox{eigenvalue $-3$}}\\\hline
\rotatebox{90}{$|\mathrm{Aut}|$}&
\rotatebox{90}{$\{16;2\}$} & 
\rotatebox{90}{$\{14;4\}$} & 
\rotatebox{90}{$\{12;6\}$} & 
\rotatebox{90}{$\{10;8\}$} &
\rotatebox{90}{$\{24;3\}$} &
\rotatebox{90}{\colorbox[gray]{0.9}{$\{23;4\}$}} &
\rotatebox{90}{$\{22;5\}$} &
\rotatebox{90}{$\{21;6\}$} &
\rotatebox{90}{$\{20;7\}$} &
\rotatebox{90}{$\{19;8\}$} &
\rotatebox{90}{$\{18;9\}$} &
\rotatebox{90}{$\{17;10\}$} &
\rotatebox{90}{$\{16;11\}$} &
\rotatebox{90}{$\{15;12\}$} &
\rotatebox{90}{$\{14;13\}$} 
\\\hline
93312 & - & - & + & - 
& + & + & + & + & + & + & + & +  & + & + & +   \\
23328 & + & + & + & + 
& + & - & - & + & - & - & + & -  & - & + & -   \\
\hline
\rotatebox{90}{$|\mathrm{Aut}|$}&
\rotatebox{90}{$\{56;7\}$} & 
\rotatebox{90}{$\{49;14\}$} & 
\rotatebox{90}{$\{42;21\}$} & 
\rotatebox{90}{$\{35;28\}$} &
\rotatebox{90}{$\{48;6\}$} &
\rotatebox{90}{$\{46;8\}$} &
\rotatebox{90}{$\{44;10\}$} &
\rotatebox{90}{$\{42;12\}$} &
\rotatebox{90}{$\{40;14\}$} &
\rotatebox{90}{$\{38;16\}$} &
\rotatebox{90}{$\{36;18\}$} &
\rotatebox{90}{$\{34;20\}$} &
\rotatebox{90}{$\{32;22\}$} &
\rotatebox{90}{$\{30;24\}$} &
\rotatebox{90}{$\{28;26\}$}
 \\\hline
\mathrm{SRG}(81,56,37,42)&\multicolumn{4}{|c|}{\mbox{eigenvalue $-7$}}&
\multicolumn{11}{|c|}{\mbox{eigenvalue $2$}}
\end{array}
$$

[[18, 5, None], [None, 4, 18]]
[[18, 6, None], [None, 3, 18]]
[[18, 7, None], [None, 2, 18]]
[[18, 8, None], [None, 1, 18]]
[[20, 18, None], [None, 1, 6]]
[[21, 10, None], [None, 2, 12]]
[[22, 9, None], [None, 2, 12]]
[[24, 4, None], [None, 2, 15]]
[[24, 14, None], [None, 1, 6]]

CR-$2$ codes: $\{18,6;3,18\}$, $\{24,14;1,6\}$ in $G(H_1)$;
$\{18,6;3,18\}$, $\{24,14;1,6\}$ in 
and  $G(H_2)$;
$\{56,18;1,42\}$-CR codes in $\overline{G(H_1)}$ and  $\overline{G(H_2)}$, respectively.

We have the following new parameters of CR-$1$ codes in
$H(n,3)$:
\begin{itemize}
    \item $\{23;4\}$-CR code, $n=12$ (previous value $n=13$, lower bound $n\ge 12$).
\end{itemize}

\newpage
\section{SRG(81,32,13,12), SRG(81,48,27,30)}
\label{s:48}

There are exactly four $[16,4,\{9,12\}]_3$ codes.
The corresponding codes
have $186624$ automorphisms
(VO$^+_4(3)=G(H_1)$, see e.g.~\cite[\S3.3]{BvM:SRG}), $5184$ 
(OA$(4,\FF_9)=G(H_2)$),
$11664$, and $11664$ automorphisms, respectively.
\begin{verbatim}
H1 = [[0,1,1,1,1,1,1,1,1,1,1,1,1,0,0,0],
      [1,0,0,0,0,1,1,1,1,1,2,2,0,1,0,0],
      [1,1,1,1,2,0,0,0,1,2,0,1,0,0,1,0],
      [1,0,1,2,0,0,1,2,1,0,1,1,0,0,0,1]]
\end{verbatim}
\begin{verbatim}
H2 = [[0,1,1,1,1,1,1,1,1,1,1,1,1,0,0,0],
      [1,0,0,0,0,1,1,1,1,1,2,2,0,1,0,0],
      [1,0,0,1,2,0,0,1,1,2,1,1,0,0,1,0],
      [1,1,2,1,0,0,1,0,1,0,1,2,0,0,0,1]]
\end{verbatim}
\begin{verbatim}
H3 = [[0,1,1,1,1,1,1,1,1,1,1,1,1,0,0,0],
      [1,0,0,0,0,1,1,1,1,1,2,2,0,1,0,0],
      [1,1,1,1,2,0,0,0,1,2,0,1,0,0,1,0],
      [0,0,1,2,1,0,1,2,0,1,2,2,0,0,0,1]]
\end{verbatim}
\begin{verbatim}
H4 = [[0,1,1,1,1,1,1,1,1,1,1,1,1,0,0,0],
      [1,0,0,0,0,1,1,1,1,1,2,2,0,1,0,0],
      [1,1,1,1,2,0,0,0,1,2,0,1,0,0,1,0],
      [2,0,1,2,0,0,1,2,1,1,0,1,0,0,0,1]]
\end{verbatim}
Each of $G(H_i)$, $i=1,2,3,4$, has CR-$1$ with the following i.a.:\\
eigenvalue $5$:
$\{24;3\}$,
$\{21;6\}$,
$\{18;9\}$,
$\{15;12\}$;\\
eigenvalue $-4$:
$\{32;4\}$,
$\{28;8\}$,
$\{24;12\}$,
$\{20;16\}$;\\
covering radius~$2$: $\{32, 18; 1, 12\}$!

The complement SRG$(81, 48, 27, 30)$ have CR-$1$ with the following i.a.:\\
eigenvalue $3$:
$\{40;5\}$,
$\{35;10\}$,
$\{30;15\}$,
$\{25;20\}$;\\
eigenvalue $-6$:
$\{48;6\}$,
$\{42;12\}$,
$\{36;18\}$,
$\{30;24\}$;\\
covering radius~$2$: $\{48, 20; 1, 30\}$!

\begin{itemize}
    \item Nothing interesting found. 
    We were looking for $\{46;8\}$.
\end{itemize}

\section{SRG(81,40,19,20)}
There are exactly four $[20,4,\{12,15\}]_3$ codes; 
the corresponding graphs (with 38880, 12960, 10368, 1944 automorphisms, respectively)
are self-complementary and have CR
with the following i.a.: $\{32;4\}$, $\{28;8\}$, $\{24;12\}$, $\{20;16\}$, $\{40;5\}$, $\{35;10\}$, $\{30;15\}$, $\{25;20\}$, $\{40, 20; 1, 20\}$.

\newpage
\section{SRG(64,27,10,12), SRG(64,36,20,20)}
\label{s:27}

There is SRG$(64,27,10,12)$ VO$^-_6(2)$ ($3317760$ automorphisms).
In total, there are $5$ nonequivalent $\FF_2$-linear SRG$(64,27,10,12)$, with $3317760$, $73728$, $10240$, $7680$, and $1536$  automorphisms, respectively.

{\small
\begin{verbatim}
H1 = [[0,0,0,0,0,0,1,1,1,1,1,1,1,1,1,1,1,1,1,1,1,1,0,0,0,0,0,],
      [0,1,1,1,1,1,0,0,0,0,0,0,0,0,0,1,1,1,1,1,1,0,1,0,0,0,0,],
      [1,0,0,0,1,1,0,0,0,0,0,1,1,1,1,0,0,1,1,1,1,0,0,1,0,0,0,],
      [1,0,0,1,0,1,0,0,1,1,1,0,0,0,1,0,1,0,1,1,1,0,0,0,1,0,0,],
      [1,0,1,0,0,1,1,1,0,0,1,0,0,1,0,0,1,1,0,1,1,0,0,0,0,1,0,],
      [1,1,0,0,0,1,0,1,0,1,1,0,1,1,1,1,0,0,0,0,1,0,0,0,0,0,1,]]
\end{verbatim}
\begin{verbatim}
H2 = [ ...,
      [1,0,0,1,0,1,0,0,1,1,1,0,0,0,1,0,1,0,1,1,1,0,0,0,1,0,0,],
      [0,0,1,0,1,1,0,1,0,1,1,0,0,1,1,1,0,0,0,1,1,0,0,0,0,1,0,],
      [0,1,0,0,1,1,1,0,1,0,1,0,1,0,1,1,0,0,1,0,1,0,0,0,0,0,1,]]
\end{verbatim}
\begin{verbatim}
H3 = [ ...,
      [0,0,0,1,1,1,0,0,1,1,1,0,0,1,1,0,1,0,0,1,1,0,0,0,1,0,0,],
      [0,0,1,1,0,1,0,1,0,0,1,0,1,1,1,1,0,0,1,0,1,0,0,0,0,1,0,],
      [1,1,0,1,0,0,1,0,0,1,0,0,1,0,1,1,0,1,1,0,1,0,0,0,0,0,1,]]
\end{verbatim}
\begin{verbatim}
H4 = [ ...,
      [1,1,0,0,1,0,1,0,0,1,1,0,0,0,1,1,0,1,0,1,1,0,0,0,0,0,1,]]
\end{verbatim}
\begin{verbatim}
H5 = [ ...,
      [1,1,0,0,1,0,1,0,0,1,0,0,1,0,1,1,1,0,0,1,1,0,0,0,0,0,1,]]
\end{verbatim}
}

$$
\begin{array}{@{\ }c|c@{\,}c@{\,}c@{\,}c|c@{\,}c@{\,}c|}
\mathrm{SRG}(64,27,10,12)&\multicolumn{4}{|c|}{\mbox{eigenvalue $3$}}&
\multicolumn{3}{|c|}{\mbox{eigenvalue~$-5$}}\\\hline
\rotatebox{0}{$|\mathrm{Aut}|$}&
\rotatebox{0}{$\{21;3\}$} & 
\rotatebox{0}{$\{18;6\}$} & 
\rotatebox{0}{$\{15;9\}$} & 
\rotatebox{0}{$\{12;12\}$} &  
\rotatebox{0}{$\{24;8\}$} & 
\rotatebox{0}{$\{20;12\}$} & 
\rotatebox{0}{$\{16;16\}$} \\\hline
3317760 & - & + & + & + & + & + & + \\
\parbox[][][l]{7em}{73728, 10240,\\ 7680, 1536} & + & + & + & + & + & + & + \\
\hline
\rotatebox{0}{$|\mathrm{Aut}|$}&
\rotatebox{0}{$\{35;5\}$} & 
\rotatebox{0}{$\{30;10\}$} & 
\rotatebox{0}{$\{25;15\}$} & 
\rotatebox{0}{$\{20;20\}$} &  
\rotatebox{0}{$\{24;8\}$} & 
\rotatebox{0}{$\{20;12\}$} & 
\rotatebox{0}{$\{16;16\}$}  
\\\hline
\mathrm{SRG}(64, 36, 20, 20)&\multicolumn{4}{|c|}{\mbox{eigenvalue $-4$}}&
\multicolumn{3}{|c|}{\mbox{eigenvalue $4$}}
\end{array}
$$

Parameters of CR-$2$ codes in each
$G(H_i)$, $i=1,2,3,4,5$:
$\{24,4;4,24\}$,
$\{24,6;2,24\}$,
$\{27,16;1,12\}$;
in each $\overline{G(H_i)}$:
$\{36, 15; 1, 20\}$.

\begin{itemize}
    \item Nothing interesting found. 
    Wanted: $\{27;5\}$, $\{21;11\}$, $\{19;13\}$, $\{17;15\}$.
\end{itemize}

\newpage
\section{SRG(64,28,12,12) and SRG(64,35,18,20)}
\label{s:28}

There are 6 $\FF_2$-linear SRG$(64,27,10,12)$, with $2580480$, $24576$, $7680$, $6144$,  $5376$, and $1536$  automorphisms, respectively.

{\footnotesize
\begin{verbatim}
H1 = [[0,0,0,0,0,0,0,1,1,1,1,1,1,1,1,1,1,1,1,1,1,1,1,0,0,0,0,0],
      [0,0,1,1,1,1,1,0,0,0,0,0,0,0,0,0,1,1,1,1,1,1,0,1,0,0,0,0],
      [0,1,0,0,0,1,1,0,0,0,0,0,1,1,1,1,0,0,1,1,1,1,0,0,1,0,0,0],
      [1,0,0,1,1,0,0,0,0,0,1,1,0,0,1,1,1,1,0,0,1,1,0,0,0,1,0,0],
      [0,1,1,0,0,0,1,0,1,1,0,1,0,0,0,1,0,1,1,1,0,1,0,0,0,0,1,0],
      [1,0,1,0,1,1,1,1,0,1,0,0,0,1,0,1,0,0,0,1,0,1,0,0,0,0,0,1]]
\end{verbatim}
\begin{verbatim}
H2 = [[0,0,0,0,0,0,0,1,1,1,1,1,1,1,1,1,1,1,1,1,1,1,1,0,0,0,0,0],
      [1,1,1,1,1,1,1,0,0,0,0,0,0,0,1,1,1,1,1,1,1,1,0,1,0,0,0,0],
      [0,0,0,0,1,1,1,0,0,0,0,1,1,1,0,0,0,1,1,1,1,1,0,0,1,0,0,0],
      [0,0,0,1,0,1,1,0,0,1,1,0,0,1,0,1,1,0,0,1,1,1,0,0,0,1,0,0],
      [0,1,1,0,0,0,1,0,1,0,0,0,1,1,1,0,1,1,1,0,0,1,0,0,0,0,1,0],
      [1,0,1,0,0,0,1,1,0,0,1,0,0,1,1,1,1,0,1,0,1,0,0,0,0,0,0,1]]
\end{verbatim}
\begin{verbatim}
H3 = [...,
      [0,0,0,1,0,1,1,0,0,1,1,0,0,1,0,1,1,0,0,1,1,1,0,0,0,1,0,0],
      [0,1,1,0,0,0,1,0,1,0,0,0,1,1,1,0,1,1,1,0,0,1,0,0,0,0,1,0],
      [1,0,1,0,0,0,1,1,1,0,1,0,0,0,0,1,1,0,1,0,1,1,0,0,0,0,0,1]]
\end{verbatim}
\begin{verbatim}
H4 = [...,
      [0,0,1,1,0,0,1,0,0,1,1,0,0,1,1,1,1,0,0,0,1,1,0,0,0,1,0,0],
      [0,1,0,0,0,1,1,1,1,0,0,0,1,0,0,1,1,0,1,1,0,1,0,0,0,0,1,0],
      [1,0,0,1,0,0,1,0,1,0,1,1,1,1,0,0,1,0,0,1,0,1,0,0,0,0,0,1]]
\end{verbatim}
\begin{verbatim}
H5 = [...,
      [0,1,0,0,0,1,1,1,1,0,0,0,1,0,0,1,1,0,1,1,0,1,0,0,0,0,1,0],
      [1,1,0,1,0,0,0,0,1,0,1,0,0,1,1,0,1,1,0,1,1,0,0,0,0,0,0,1]]
\end{verbatim}
\begin{verbatim}
H6 = [...,
      [0,1,0,0,0,1,1,0,1,0,1,0,1,0,0,1,1,0,0,1,1,1,0,0,0,0,1,0],
      [1,0,0,1,1,0,0,1,0,0,0,1,1,0,1,0,1,0,1,1,0,1,0,0,0,0,0,1]]
\end{verbatim}
}

$$
\begin{array}{c|cccc|ccccccc|}
\mathrm{SRG}(64,28,12,12)&\multicolumn{4}{|c|}{\mbox{eigenvalue $4$}}&
\multicolumn{7}{|c|}{\mbox{eigenvalue $-4$}}\\\hline
\rotatebox{90}{$|\mathrm{Aut}|$}&\rotatebox{90}{$\{21;3\}$} & 
\rotatebox{90}{$\{18;6\}$} & 
\rotatebox{90}{$\{15;9\}$} & 
\rotatebox{90}{$\{12;12\}$} &  
\rotatebox{90}{$\{28;4\}$} & 
\rotatebox{90}{$\{26;6\}$} & 
\rotatebox{90}{$\{24;8\}$} & 
\rotatebox{90}{$\{22;10\}$} & 
\rotatebox{90}{$\{20;12\}$} & 
\rotatebox{90}{$\{18;14\}$} & 
\rotatebox{90}{$\{16;16\}$} \\\hline
2580480 & + & + & + & +  & + & - & + & - & + & - & + \\
24576 & - & + & - & +  & + & + & + & + & + & + & + \\
7680 & - & + & + & +  & + & - & + & - & + & - & +  \\
6144,5376,1536  & - & + & - & +  & + & - & + & - & + & - & +  \\ 
\hline
\rotatebox{90}{$|\mathrm{Aut}|$}&\rotatebox{90}{$\{35;5\}$} & 
\rotatebox{90}{$\{30;10\}$} & 
\rotatebox{90}{$\{25;15\}$} & 
\rotatebox{90}{$\{20;20\}$} &  
\rotatebox{90}{$\{28;4\}$} & 
\rotatebox{90}{$\{26;6\}$} & 
\rotatebox{90}{$\{24;8\}$} & 
\rotatebox{90}{$\{22;10\}$} & 
\rotatebox{90}{$\{20;12\}$} & 
\rotatebox{90}{$\{18;14\}$} & 
\rotatebox{90}{$\{16;16\}$} \\\hline
\mathrm{SRG}(64,35,18,20)&\multicolumn{4}{|c|}{\mbox{eigenvalue $-5$}}&
\multicolumn{7}{|c|}{\mbox{eigenvalue $3$}}
\end{array}
$$

Parameters of CR-$2$ codes in each
$G(H_i)$, $i=1,...,6$:
$\{24,4;4,24\}$,
$\{28,15;1,12\}$;
in each $\overline{G(H_i)}$:
$\{35, 16; 1, 20\}$.

\begin{itemize}
    \item Nothing interesting found. 
    Wanted: $\{17;15\}$.
\end{itemize}

\newpage
\section[Wanted: q=3]{Wanted: $q=3$}
To find CR-$1$ codes with new parameters in $H(k/2,3)$,
we need the following Cayley graphs on $\FF_3^{m}$:
\begin{itemize}
    \item (\underline{Done, see Section~\ref{s:10,8}.}) A graph of degree $k=14$, with eigenvalue $k-3\cdot 6=-4$ and
    a $\{10;8\}$-CR code. 
    \item (\underline{Done, see Section~\ref{s:24}.}) A graph of degree $k=24$ with eigenvalue $k-3\cdot 9=-3$ and
    a $\{23;4\}$-CR code.
    \item A graph of degree $k=26$, with eigenvalue $k-3\cdot 12=-10$ and
    a $\{20;16\}$-CR code (for degree $k=28$, it is solved  by doubling the parameters in Section~\ref{s:10,8}). 
    \item A graph of degree $k=28+2s$, $s=0,1$, with eigenvalue $k-3\cdot 12=k-36$ and
    a $\{28;8\}$-CR code. \\
    (Update: found for $s=1$ in a degree-$30$ Cayley graph on $\FF_3^4$.)
    \item A graph of degree $k=32+2s$, $s=0,1,2,3$, with eigenvalue $k-3\cdot 15=k-45$ and
    a $\{25; 20\}$-CR code or
    a $\{35; 10\}$-CR code ($s=2,3$). 
    \item A graph of degree $k=40+2s$, $s=0,1$, with eigenvalue $k-3\cdot 18=k-54$ and a CR code with one of the following i.a.:
$\{28; 26\}$,
$\{32; 22\}$,
$\{34; 20\}$,
$\{38; 16\}$,
$\{40; 14\}$,
$\{30; 24\}$ ($k=-1$),
$\{42; 12\}$ ($k=1$),
$\{46; 8\}$ ($k=3$).
\end{itemize}

\section[Wanted: q=2]{Wanted: $q=2$}
To find CR-$1$ codes with new parameters in $H(k,2)$,
we need the following Cayley graphs on $\FF_2^{m}$:
\begin{itemize}
    \item A graph of degree $k=24+s$, $s=0,...,5$, with eigenvalue $k-2\cdot 16=k-32$ and a CR code with one of the following i.a.:\\
$\{27;5\}$ ($s=3$),\\
$\{25;7\}$ ($s=1,2$),\\
$\{23;9\}$ ($s=0,1,2$),\\
$\{22;10\}$ ($s=1$),\\
$\{21;11\}$ ($s=0,1,2,3$),\\
$\{19;13\}$ ($s=1,2,3$),\\
$\{17;15\}$ ($s=0,1,2,3,4,5$).\\
Update: all Cayley graphs on $\FF_2^{5}$ have been checked, nothing interesting; \url{https://sagecell.sagemath.org/?q=zecdzi}
\item A graph of degree $k=36+s$, $s=0,1,2$, with eigenvalue $k-2\cdot 24=k-48$ and a CR code with one of the following i.a.:
$\{33;15\}$.
\end{itemize}

\section
[New parameters of CR codes in H(n,q)]
{New parameters of CR codes in $H(n,q)$} \label{s:sum}

\begin{tabular}{@{\ }cccccl@{\ }}
    I.A. & $q$ & $n$($n_{\mathrm{old}}$) & l.bound & {$\tau = (b+c)/qn$} & \phantom{$= (b+c)/qn$} in graph \\[1mm] \hline
 $\{10;8\}$ & $3$ & $7(8)$ & $\ge 7$ & $6/7 \sim 0.857$ & \\[1mm]
 $\{23;4\}$ & $3$ & $12(13)$ & $\ge 12$ & $3/4 = 0.75$ &${\mathrm{VNO}^+_4(3)}$\\[1mm]
 $\{22;5\}$ & $3$ & $11(13)$ & $\ge 11$ & $9/11 \sim 0.818$ & Berlekamp--Van Lint--Seidel graph\\[1mm]
 $\{21;6\}$ & $3$ & $11(12)$ & $\ge 11$ & $9/11 \sim 0.818$ & B.--v.L.--S. \cite[10.55]{BvM:SRG}  \\[1mm]
 $\{20;7\}$ & $3$ & $11(13)$ & $\ge 11$ & $9/11 \sim 0.818$ & B.--v.L.--S. \cite[10.55]{BvM:SRG}   \\[1mm]
 $\{19;8\}$ & $3$ & $11(12)$ & $\ge 11$ & $9/11 \sim 0.818$ & B.--v.L.--S. \cite[10.55]{BvM:SRG}   \\[1mm]
 $\{17;10\}$ & $3$ & $11(13)$ & $\ge 11$ & $9/11 \sim 0.818$ & B.--v.L.--S. \cite[10.55]{BvM:SRG}   \\[1mm]
 $\{16;11\}$ & $3$ & $11(12)$ & $\ge 11$ & $9/11 \sim 0.818$ & B.--v.L.--S. \cite[10.55]{BvM:SRG}   \\[1mm]
 $\{15;12\}$ & $3$ & $10(12)$ & $\ge 10$ & $9/10=0.9$ & Brouwer--Haemers graph \cite[10.28]{BvM:SRG}\\[1mm]
 $\{14;13\}$ & $3$ & $11(13)$ & $\ge 11$ & $9/11 \sim 0.818$ & B.--v.L.--S. \cite[10.55]{BvM:SRG}   \\[1mm]
 $\{20;16\}$ & $3$ & $15(16)$ & $\ge 13$ & $4/5 =0.8$ & $\Delta\ne{\mathrm{VNO}^-_4(3)}$ \cite[10.29]{BvM:SRG}\\[1mm]
   &   & $14(16)$ & $\ge 13$ & $6/7 \sim 0.857$ & $2\times\{10;8\}$ \\[1mm]
 $\{28;8\}$ & $3$ & $15(16)$ & $\ge 14$ & $4/5 =0.8$ & $\Cay(\FF_3^4)$, Section~\ref{s:28-8}\\[1mm]
 $\{35;10\}$ & $3$ & $19(20)$ & $\ge 18$ & $15/19 \sim 0.789$ & $\Cay(\FF_3^4)$, Section~\ref{s:35-10}\\[1mm]
 $\{25;20\}$ & $3$ & $19(20)$ & $\ge 16$ & $15/19 \sim 0.789$ & $\Cay(\FF_3^4)$, Section~\ref{s:35-10}\\[1mm]
 $\{46;8\}$ & $3$ & $25(26)$ & $\ge 23$ & $18/25=0.72$ & $\overline{\mathrm{VNO}^-_4(3)}$ \cite[10.29]{BvM:SRG}\\[1mm]
   &  & $24(26)$ & $\ge 23$ & $3/4=0.75$ & $2\times\{23;4\}$  \\[1mm]
 $\{50;4\}$ & $3$ & $25(26)$ & $\ge 25$ & $18/25=0.72$ & $\overline{\mathrm{VNO}^-_4(3)}$  \cite[10.29]{BvM:SRG}\\[1mm]
 $\{21,4;2,21\}$ & $3$ & $11(-)$ & $\ge 11$ & --- & $\Cay(\FF_3^4)$, Section~\ref{s:21,4,2,21} 
\end{tabular}

We have searched:
Cayley graphs on $\FF_3^4$ up to degree~$54$,
Cayley graphs on $\FF_2^6$ up to degree~$21$,
and Cayley graphs on $\FF_2^6$ of degree~$22$ 
with at least $2^6\cdot 16$ automorphisms.

\newpage

\nocite{HamHel96}
\nocite{BFWW2006}
\nocite{BvM:SRG}
\nocite{CRCinDRG}
\nocite{KooKroMar:Ch7tables}

\begin{thebibliography}{10}

\bibitem{BKMTV}
E.~A. Bespalov, D.~S. Krotov, A.~A. Matiushev, A.~A. Taranenko, and K.~V.
  Vorob'ev.
\newblock Perfect $2$-colorings of {H}amming graphs.
\newblock {\em
  \href{http://onlinelibrary.wiley.com/journal/10.1002/(ISSN)1520-6610}{J.
  Comb. Des.}}, 29(6):367--396, June 2021.
\newblock \DOI{10.1002/jcd.21771}.

\bibitem{QextNewEdition}
I.~Bouyukliev and S.~Bouyuklieva.
\newblock {\texttt{QextNewEdition}}.
\newblock
  \url{http://www.moi.math.bas.bg/moiuser/~data/Software/QextNewEdition}.

\bibitem{BFWW2006}
I.~Bouyukliev, V.~Fack, W.~Willems, and J.~Winne.
\newblock Projective two-weight codes with small parameters and their
  corresponding graphs.
\newblock {\em \href{http://link.springer.com/journal/10623}{Des. Codes
  Cryptography}}, 41(1):59--78, 2006.
\newblock \DOI{10.1007/s10623-006-0019-1}.

\bibitem{BvM:SRG}
A.~E. Brouwer and H.~Van~Maldeghem.
\newblock {\em Strongly Regular Graphs}, volume 182 of {\em Encycl. Math.
  Appl.}
\newblock Cambridge: Cambridge University Press, 2022.

\bibitem{FDF:PerfCol}
D.~G. Fon-Der-Flaass.
\newblock Perfect $2$-colorings of a hypercube.
\newblock {\em \href{http://link.springer.com/journal/11202}{Sib. Math. J.}},
  48(4):740--745, 2007.
\newblock \DOI{10.1007/s11202-007-0075-4} translated from
  \href{http://www.mathnet.ru/php/journal.phtml?jrnid=smj\&option_lang=eng}{Sib.
  Mat. Zh.} 48(4) (2007), 923-930.

\bibitem{GAMS}
{\em GAMS Development Corporation. General Algebraic Modeling System (GAMS)
  Release 24.2.1.}
\newblock Washington, DC, USA, 2013.

\bibitem{HamHel96}
N.~Hamada and T.~Helleseth.
\newblock A characterization of some {$\{3v_2+v_3,3v_1+v_2;3,3\}$}mminihypers
  and some {$[15,4,9;3]$}-codes with {$B_ 2=0$}.
\newblock {\em \href{http://www.sciencedirect.com/science/journal/03783758}{J.
  Stat. Plann. Inference}}, 56(1):129--146, 1996.
\newblock \DOI{10.1016/S0378-3758(96)00014-6}.

\bibitem{KooKroMar:Ch7tables}
J.~H. Koolen, V.~N. Potapov, and W.~J. Martin.
\newblock Completely regular codes: Tables of small parameters for binary and
  ternary {H}amming graphs.
\newblock In Shi and Sol\'e \cite{CRCinDRG}, chapter~7.
\url{https://www.researchgate.net/publication/385378754_Completely_Regular_Codes_Tables_of_Small_Parameters_for_Binary_and_Ternary_Hamming_Graphs}

\bibitem{KroPot:Ch1}
D.~S. Krotov and V.~N. Potapov.
\newblock Completely regular codes and equitable partitions.
\newblock In Shi and Sol\'e \cite{CRCinDRG}, chapter~1.
\url{https://www.researchgate.net/publication/382042905_Completely_Regular_Codes_and_Equitable_Partitions}

\bibitem{CRCinDRG}
M.~Shi and P.~Sol\'e, editors.
\newblock {\em Completely Regular Codes in Distance Regular Graphs}.
\newblock Chapman \& Hall/CRC Monographs and Research Notes in Mathematics. CRC
  Press, Boca Raton FL, 2025.

\end{thebibliography}

\providecommand\href[2]{#2} \providecommand\url[1]{\href{#1}{#1}}
  \def\DOI#1{{\href{https://doi.org/#1}{https://doi.org/#1}}}\def\DOIURL#1#2{{\href{https://doi.org/#2}{https://doi.org/#1}}}

\end{document}